\documentclass[a4paper, twoside]{article}
\usepackage[utf8]{inputenc}
\usepackage[english]{babel}
\usepackage[T1]{fontenc}
\usepackage{a4wide}

\usepackage{amsmath, latexsym, amsfonts, amssymb, amsthm, amscd}
\usepackage{yfonts}
\usepackage{mathdots}
\usepackage{dsfont}
\usepackage{titlesec}

\newcounter{c}

\titleformat{\section}{\normalfont\scshape\center}{\thesection}{1em}{}
\titleformat{\subsection}{\normalfont\scshape}{\thesubsection}{1em}{}
\titleformat{\subsubsection}{\normalfont\scshape}{\thesubsubsection}{1em}{}

\newtheorem{lem}{Lemma}
\newtheorem{defi}{Definition}
\newtheorem{prop}{Proposition}

\theoremstyle{definition}
\newtheorem*{rem}{Remark}


\author{\textsc{Guillaume Barraquand}\thanks{Laboratoire de Probabilités et Modèles Aléatoires, Universit\'e Paris Diderot, 5 rue Thomas Mann, 75013 PARIS. E-mail: {\tt barraquand@math.univ-paris-diderot.fr}}}
\title{\textsc{A short proof of a symmetry identity for the $(q, \mu, \nu)$-deformed Binomial distribution}}
\date{}
\begin{document}
\renewcommand{\labelitemi}{\textbullet}
\maketitle
\begin{abstract}
We give a short and elementary proof of a $(q, \mu, \nu)$-deformed Binomial distribution identity arising in the study of the $(q, \mu, \nu)$-Boson process and the $(q, \mu, \nu)$-TASEP.
This identity found by Corwin in \cite{corwin2014q} was a key technical step to prove an intertwining relation between the Markov transition matrices of these two classes of discrete-time Markov chains. This was used in turn to derive exact formulas for a large class of observables of both these processes.
\end{abstract}

\section*{Introduction}
Zero-range process and exclusion processes are generic stochastic models for transport phenomena on a lattice. Integrability of these models is an important question. 
In a short letter \cite{evans2004factorized}, Evans-Majumdar-Zia considered spatially homogeneous discrete time zero-range processes on periodic domains. They adressed and solved the question of characterizing the jump distributions for which invariant measures are product measures. 
Povolotsky \cite{povolotsky2013integrability} further examined the precise form of jump distributions allowing solvability by Bethe ansatz, and found the $(q, \mu, \nu)$-Boson process and the $(q, \mu, \nu)$-TASEP. He also conjectured exact formulas for the model on the infinite lattice.
 Using a Markov duality between the $(q, \mu, \nu)$-Boson process and the $(q, \mu, \nu)$-TASEP, Corwin \cite{corwin2014q} showed a variant of these formulas and provided a method to compute a large class of observables. 
 This can be seen as a generalization of a similar work on  $q$-TASEP and $q$-Boson process performed in \cite{borodin2012duality, borodin2013discrete}.
 In his proof, the intertwining relation between the two Markov transition matrices essentially boils down to a $(q, \mu, \nu)$-deformed Binomial distribution identity \cite[Proposition 1.2]{corwin2014q}.
  The proof was adapted from \cite[Lemma 3.7]{ borodin2013discrete} which is the $\nu=0$ case, and required the use of Heine's summation formula for the basic hypergeometric series $_2\phi_1 $. In the following, we give a short proof of this identity.

\section*{A symmetry property for the  $(q, \mu, \nu)$-deformed Binomial distribution}

First, we define the three parameter deformation of the Binomial distribution introduced in \cite{povolotsky2013integrability}.
\begin{defi}
For $\vert q\vert <1$, $0\leqslant \nu\leqslant \mu <1$ and integers $0\leqslant j\leqslant m$, define the function 
\begin{equation*}
\varphi_{q, \mu, \nu}(j\vert m ) = \mu^j \frac{(\nu/\mu;q)_j(\mu;q)_{m-j}}{(\nu ; q)_m} \binom{m}{j}_q, 
\end{equation*}
where
\begin{equation*}
\binom{m}{j}_q = \frac{\left(q ; q \right)_{m}}{\left(q ; q \right)_{j}\left(q ; q \right)_{m-j}}
\end{equation*}
are $q$-Binomial coefficients with, as usual, 
\begin{equation*}
\left(z ; q \right)_{n} = \prod_{i=0}^{n-1}\left(1-q^iz\right).
\end{equation*}
\end{defi}
It happens that for each $m\in\mathbb{N}\cup{\infty}$, this defines a probability distribution on $\lbrace 0, \dots, m \rbrace$.
\begin{lem}[Lemma 1.1, \cite{corwin2014q}]
\label{lemme}
For any $\vert q\vert <1$ and  $0\leqslant \nu\leqslant \mu <1$ , 
\begin{equation*}
\sum_{j=0}^m \varphi_{q, \mu, \nu}(j\vert m ) =1.
\end{equation*}
\end{lem}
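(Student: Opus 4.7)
After clearing the denominator $(\nu;q)_m$, the claim becomes the polynomial identity
\begin{equation*}
(\nu;q)_m \;=\; \sum_{j=0}^m \binom{m}{j}_q \mu^j (\nu/\mu;q)_j (\mu;q)_{m-j},
\end{equation*}
which I plan to establish by induction on $m$. The base case $m=0$ is immediate since both sides equal $1$.

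For the inductive step, I will expand the $(m+1)$-version of the right-hand side using the $q$-Pascal rule
\begin{equation*}
\binom{m+1}{j}_q = \binom{m}{j}_q + q^{m+1-j}\binom{m}{j-1}_q,
\end{equation*}
splitting the sum into two pieces. In the first piece (where $0\leqslant j\leqslant m$), I factor out $(\mu;q)_{m+1-j} = (1-\mu q^{m-j})(\mu;q)_{m-j}$. In the second piece, after shifting the summation index $j\mapsto k+1$, I use the one-step recursion $\mu^{k+1}(\nu/\mu;q)_{k+1} = (\mu - \nu q^k)\,\mu^k(\nu/\mu;q)_k$ to rewrite the weight as a factor $q^{m-k}(\mu-\nu q^k)$ multiplying the level-$m$ summand.

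The crux is then the elementary telescoping identity
\begin{equation*}
(1 - \mu q^{m-k}) + q^{m-k}(\mu - \nu q^k) \;=\; 1 - \nu q^m,
\end{equation*}
which lets me factor $(1 - \nu q^m)$ out of the recombined sum. What remains inside is exactly the right-hand side at level $m$, which by the induction hypothesis equals $(\nu;q)_m$; hence the total is $(1 - \nu q^m)(\nu;q)_m = (\nu;q)_{m+1}$, closing the induction.

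The only delicate point is bookkeeping the boundary contributions at $j=0$ and $j=m+1$, but these are automatically harmless because the corresponding $q$-binomial coefficients vanish. Beyond this routine check, I foresee no serious obstacle; in particular, the argument needs no basic hypergeometric summation formula, in contrast to the original proof in \cite{corwin2014q} which relies on Heine's identity for ${}_2\phi_1$.
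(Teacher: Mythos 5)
Your proof is correct: with the $q$-Pascal rule $\binom{m+1}{j}_q=\binom{m}{j}_q+q^{m+1-j}\binom{m}{j-1}_q$ the two pieces recombine over the level-$m$ summand with the constant coefficient $(1-\mu q^{m-k})+q^{m-k}(\mu-\nu q^k)=1-\nu q^m$, and $(1-\nu q^m)(\nu;q)_m=(\nu;q)_{m+1}$ closes the induction; the boundary terms vanish as you say because $\binom{m}{m+1}_q=\binom{m}{-1}_q=0$. This is a genuinely different route from the paper's proof of the lemma, which simply defers to Heine's summation formula for $_2\phi_1$ as in \cite{corwin2014q}. It is closer in spirit to the paper's concluding Remark, which also sketches an induction on $m$, but the Remark uses the other form of the $q$-Pascal identity, $\binom{m+1}{j}_q=q^j\binom{m}{j}_q+\binom{m}{j-1}_q$; the extra factor $q^j$ there forces a rescaling of parameters $(\mu,\nu)\mapsto(q\mu,q\nu)$ in the induction hypothesis and an appeal to the two-variable recurrence (\ref{recurrence}) to eliminate the auxiliary quantity $S_{m,1}$. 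Your choice of the Pascal rule carrying the weight $q^{m+1-j}$ avoids both complications and keeps the argument entirely within the single-variable statement, which is precisely the ``elegant'' $\nu\neq 0$ computation the Remark says is missing. One cosmetic point worth adding: to cover $\mu=\nu=0$ without dividing by $\mu$, note that $\mu^j(\nu/\mu;q)_j=\prod_{i=0}^{j-1}(\mu-\nu q^i)$ is a polynomial in $(\mu,\nu)$, so your one-step recursion $\mu^{k+1}(\nu/\mu;q)_{k+1}=(\mu-\nu q^k)\,\mu^k(\nu/\mu;q)_k$ is an identity of polynomials and the whole argument is purely algebraic.
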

\begin{proof}
As shown in \cite{corwin2014q},  this equation is equivalent to a specialization of  some known summation formula for basic hypergeometric series $_2\phi_1 $ (Heine's $q$-generalizations of Gauss' summation formula).
\end{proof}
This probability distribution can be seen as a $q$-analogue of the Binomial distribution, depending on two parameters $0\leqslant \nu\leqslant \mu <1$ and we call it the $(q, \mu, \nu)$-Binomial distribution. In \cite{povolotsky2013integrability}, various interesting degenerations are studied. We now state and prove the main identity. 
\begin{prop}[Proposition 1.2, \cite{corwin2014q}]
Let $X$ (resp. $Y$) be a random variable following the $(q, \mu, \nu)$-Binomial distribution on $\lbrace 0,\dots,  x\rbrace$ (resp. $\lbrace 0, \dots, y\rbrace$).
We have
\begin{equation*}
\mathbb{E}\left[ q^{xY}\right] = \mathbb{E}\left[ q^{yX}\right].
\end{equation*}
\end{prop}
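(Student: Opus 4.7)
The plan is to prove the identity by showing that both expectations appear as coefficients of a single double generating function that turns out to be symmetric in two auxiliary variables. Define
\[
\Phi(z, w) := \sum_{x, y \geq 0} \frac{(\nu; q)_x (\nu; q)_y}{(q; q)_x (q; q)_y} \, \mathbb{E}\!\left[ q^{x Y}\right] z^x w^y,
\]
with $Y$ distributed as $\varphi_{q, \mu, \nu}(\cdot \mid y)$. The prefactor in front of $\mathbb{E}[q^{xY}]$ is itself symmetric under $x \leftrightarrow y$, so the identity $\Phi(z, w) = \Phi(w, z)$ will give $\mathbb{E}[q^{xY}] = \mathbb{E}[q^{yX}]$ directly by reading off the coefficient of $z^x w^y$ on both sides.

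To compute $\Phi(z,w)$, I substitute the definition of $\varphi_{q, \mu, \nu}$, use $\binom{y}{j}_q = (q;q)_y / [(q;q)_j (q;q)_{y-j}]$ to cancel $(q;q)_y$, and change the order of summation. The three resulting one-dimensional sums (over $y - j$, over $j$, and then over $x$) are each of the form handled by the $q$-binomial theorem $\sum_{n \geq 0} \frac{(a; q)_n}{(q; q)_n} t^n = \frac{(at; q)_\infty}{(t; q)_\infty}$, with the one minor detour of expanding $\frac{(\nu q^x w; q)_\infty}{(\mu q^x w; q)_\infty}$ into a power series in $q^x$ before the $x$-sum can be carried out. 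I expect to land on the compact but visibly asymmetric expression
\[
\Phi(z, w) = \frac{(\mu w; q)_\infty (\nu z; q)_\infty}{(w; q)_\infty (z; q)_\infty} \; {}_2\phi_1\!\left( \nu / \mu,\, z;\, \nu z;\, q,\, \mu w \right).
\]

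The final step is to symmetrize this in $z$ and $w$ using Heine's second transformation
\[
{}_2\phi_1(a, b; c; q, t) = \frac{(c/b; q)_\infty (b t; q)_\infty}{(c; q)_\infty (t; q)_\infty} \; {}_2\phi_1\!\left( \frac{a b t}{c},\, b;\, b t;\, q,\, \frac{c}{b} \right)
\]
applied with $(a, b, c, t) = (\nu / \mu, z, \nu z, \mu w)$: the new ${}_2\phi_1$ argument simplifies nicely to $abt/c = w$, the transformed series becomes ${}_2\phi_1(w, z; \mu z w; q, \nu)$ which is symmetric in $z, w$ (since the top two slots of a ${}_2\phi_1$ commute), and the outer Pochhammer prefactor collapses to the manifestly symmetric $\frac{(\nu; q)_\infty (\mu z w; q)_\infty}{(z; q)_\infty (w; q)_\infty}$. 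The main obstacle is selecting the right generating function and the right Heine transformation: among the three classical Heine transformations, only the second one produces the desired $z \leftrightarrow w$ symmetrization in a single step. Everything else is routine manipulation of $q$-Pochhammer symbols based on the telescoping identity $(a; q)_\infty = (a; q)_n (a q^n; q)_\infty$.
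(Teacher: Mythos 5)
Your proof is correct, but it is a genuinely different argument from the one in the paper --- and, somewhat ironically, it is much closer in spirit to the original proof of Corwin that this paper was written to avoid. The paper's proof is elementary: it shows that $S_{x,y}=\mathbb{E}[q^{yX}]$ satisfies a two-variable recurrence, derived from the $q$-Pascal identity, whose coefficients and boundary conditions are invariant under exchanging $x$ and $y$, so that $S_{x,y}=S_{y,x}$ follows by induction with no hypergeometric input beyond Lemma~\ref{lemme}. You instead package both expectations into the double generating function $\Phi(z,w)$, evaluate it in closed form via three applications of the $q$-binomial theorem and the splitting $(a;q)_\infty=(a;q)_n(aq^n;q)_\infty$, and then symmetrize with Heine's second transformation; I have checked the key simplifications ($c/b=\nu$, $bt=\mu zw$, $abt/c=w$) and the final expression $\frac{(\nu;q)_\infty(\mu zw;q)_\infty}{(z;q)_\infty(w;q)_\infty}\,{}_2\phi_1(w,z;\mu zw;q,\nu)$ is indeed symmetric, and the prefactor $\frac{(\nu;q)_x(\nu;q)_y}{(q;q)_x(q;q)_y}$ is nonzero for $|q|<1$, $0\leqslant\nu<1$, so coefficient extraction is legitimate (you should say a word about absolute convergence for $|z|,|w|<1$ to justify the interchanges of summation, but this is routine). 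What your route buys is that it does not use Lemma~\ref{lemme} at all --- on the contrary, setting $z=0$ in your closed form gives $\Phi(0,w)=(\nu w;q)_\infty/(w;q)_\infty$ and hence re-proves the normalization as a byproduct --- and it produces an explicit generating function that may be of independent interest. What it costs is exactly the machinery (the ${}_2\phi_1$ series and a Heine transformation) whose elimination is the stated contribution of the paper; as a self-contained proof of the identity it is fine, but it should be viewed as a streamlined version of the hypergeometric approach rather than an elementary one.
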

\begin{proof}
Let $S_{x,y} := \sum_{j=0}^x\varphi_{q, \mu, \nu}(j\vert x ) q^{jy}$. We have to show that $S_{x,y} =S_{y,x}$ for all integers $x,y\geqslant 0$.
Our proof is based on the fact that $S_{x,y}$ satisfies a recurrence relation which is invariant when exchanging the roles of $x$ and $y$.
First notice that by lemma \ref{lemme}, $S_{x,0}=1$ for all $x\geqslant 0$, and by definition $S_{0,y}=1$ for all $y\geqslant 0$. 

The Pascal identity for q-Binomial coefficients, (see 10.0.3 in \cite{andrews2001special}), 
\begin{equation*}
\binom{x+1}{j}_q=\binom{x}{j}_q q^j+\binom{x}{j-1}_q,
\end{equation*}
 yields
\begin{eqnarray*}
S_{x+1,y} &=& \sum_{j=0}^{x+1} \mu^j \frac{(\nu/\mu;q)_j(\mu;q)_{x+1-j}}{(\nu ; q)_{x+1}} \binom{x}{j}_q q^j q^{jy} + \sum_{j=0}^{x+1} \mu^j \frac{(\nu/\mu;q)_j(\mu;q)_{x+1-j}}{(\nu ; q)_{x+1}} \binom{x}{j-1}_q q^{jy}, \\
&=& \sum_{j=0}^x \varphi_{q, \mu, \nu}(j\vert x ) \frac{1-\mu q^{x-j}}{1-\nu q^x}q^jq^{jy} + \sum_{j=0}^x \varphi_{q, \mu, \nu}(j\vert x ) \mu  \frac{1-\nu/\mu q^j}{1-\nu q^x}  q^y q^{jy}.
\end{eqnarray*}
The last equation can be rewritten 
\begin{eqnarray*}
(1-\nu q^x) S_{x+1,y} &=& \left(S_{x,y+1} -\mu q^x S_{x,y}\right ) + \left(\mu q^y (S_{x,y} - \nu/\mu S_{x,y+1}) \right),\\
&=& (1-\nu q^y)S_{x,y+1} + \mu(q^y-q^x)S_{x,y}.
\end{eqnarray*}
Thus, the sequence $\left(S_{x,y}\right)_{(x,y)\in\mathbb{N}^2}$ is completely determined by 
\begin{equation}
\left\lbrace \begin{array}{l}
(1-\nu q^x) S_{x+1,y}=(1-\nu q^y)S_{x,y+1} + \mu(q^y-q^x)S_{x,y},\\
S_{x,0}=S_{0,y}=1.
\end{array} \right.
\label{recurrence}
\end{equation}
Setting $T_{x,y} = S_{y,x}$, one notices that the sequence $\left(T_{x,y}\right)_{(x,y)\in\mathbb{N}^2}$ enjoys the same recurrence, which concludes the proof.
\end{proof}

\begin{rem}
To completely avoid the use of basic hypergeometric series, one would also need a similar proof of lemma \ref{lemme}. One can prove the result by recurrence on $m$ (as in the proof of \cite[lemma 1.3]{borodin2013discrete}), but the calculations are less elegant when $\nu\neq0$.

More precisely, fix some $m$ and suppose that for any $0\leqslant \nu\leqslant \mu <1$, $S_{m,0}(q, \mu, \nu):=\sum_{j=0}^{m} \varphi_{q, \mu, \nu}(j\vert m )=1$. Pascal's identity yields
\begin{eqnarray*}
S_{m+1,0}(q, \mu, \nu) &=&\frac{1-\mu }{1-\nu} S_{m,0}(q, q\mu, q\nu)+ \sum_{j=0}^m \varphi_{q, \mu, \nu}(j\vert m ) \mu  \frac{1-\nu/\mu q^j}{1-\nu q^m},  \\
&=&\frac{1-\mu }{1-\nu} S_{m,0}(q, q\mu, q\nu)+\frac{\mu }{1-\nu q^m}\left(S_{m,0}(q, \mu, \nu) - \nu/\mu S_{m,1}(q, \mu, \nu)\right).
\end{eqnarray*}
Then, using the recurrence formula (\ref{recurrence}) for $S_{m,1}(q, \mu, \nu)$, and applying the recurrence hypothesis, one obtains $S_{m+1,0}(q, \mu, \nu)=1$.
\end{rem}

\subsection*{Acknowledgements}
The author would like to thank his advisor Sandrine P\'ech\'e for  her support.

\bibliographystyle{amsplain}
\bibliography{qidentite.bib}
\end{document}